\documentclass[12pt,a4paper]{article}
\usepackage{fullpage}
\usepackage[utf8x]{inputenc}
\usepackage{hyperref}
\usepackage[square]{natbib}
\usepackage{amsmath,amsthm,amssymb}

\newtheorem{theorem}{Theorem}
\newtheorem{corollary}{Corollary}
\newtheorem{proposition}{Proposition}

\newtheorem{preliminary}{Preliminary}

\theoremstyle{definition}
\newtheorem{definition}{Definition}

\newcommand{\N}{\mathbb{N}}
\newcommand{\NZ}{\N_0}
\glossary{$\NZ$: natural numbers including 0}

\newcommand{\C}{\mathbb{C}}

\newcommand{\eps}{\varepsilon}

\DeclareMathSymbol{\restr}{\mathpunct}{AMSa}{"16}

\newcommand{\mt}[1]{\quad\mbox{#1}\quad}
\newcommand{\abs}[1]{\left|#1\right|}






\author{Henryk Trappmann}
\title{The Intuitive Logarithm}
\begin{document}
\bibliographystyle{plainnat}
\citestyle{plain}
\maketitle
\begin{abstract}
We introduce the {\em intuitive} method to select an analytic Abel
function of an analytic function f at a non-fixpoint. Due to the
complexity of this method by involving matrix inversion of 
increasing size there is little known about its convergence. 

We show its convergence in the simplest but still complicated case
f(x)=bx.
We show that the obtained Abel function is, as expected, the
logarithm to base b, independent on its development point. As
a by-product we obtain a new polynomial approximation sequence for the
logarithm to base b.
\end{abstract}

\section{Introduction}
In the context of discussions of non-integer iterates of the
exponential function there emerged a method which --- in case of
success --- selects an analytic Abel function at a non-fixpoint.

For a function $f\colon G\to \C$ we call a function $\alpha\colon
G\cap f^{-1}(G)\to \C$ {\em Abel function} of $f$ iff it satisfies the Abel equation
\begin{align}\label{eq:Abel}
  \alpha(f(z))=\alpha(z)+1
\end{align}
on its domain.
Abel functions are an essential tool for non-integer/continuous
iteration. For $\alpha$ being bijective in an appropriate way one can
define iterates by 
\begin{align*}
  f^{[t]}(z)=\alpha^{-1}(t+\alpha(z)),
\end{align*}
they satisfy
\begin{align*}
  f^{[1]}&=f & f^{[s+t]}=f^{[s]}\circ f^{[t]}.
\end{align*}
for $t$ being contained in some additive semigroup of $\C$ containing $1$.
Particularly $f^{[n]}$ is the $n$-times iteration/composition of the
function $f$ for positive integers $n$.

We always consider Abel functions up to an additive constant, as one
can see that if $\alpha=\alpha_1$ satisfies \eqref{eq:Abel} then also
$\alpha(z)=\alpha_1(z)+c$ satisfies \eqref{eq:Abel}. 
However even up to an additive constant analytic Abel
functions are not uniquely determined: If $\theta$ is an analytic
1-periodic function then 
$\alpha(z)=\theta(z)+\theta(\alpha_1(z))$ also satisfies the Abel
equation \eqref{eq:Abel} which is easy to verify.

There is an exhaustive theory about existence and uniqueness of
analytic iterations (and the corresponding analytic Abel functions)
developed at a fixpoint of $f$, see e.g.\ Szekeres \citep{szekeres:regular},
Écalle \cite{Ecalle:InvariantsHolomorphes} or the monograph
\cite{Kuczma:IterativeFunctionalEquations}. We refer to this method as
{\em regular iteration} following Szekeres.

As the exponential function $e^x$ has no real fixpoint, regular iteration is not applicable
and quite different methods emerged aimed at obtaining real-analytic Abel functions
anyway
\cite{Kneser:Reelle,ref53.0304.02,Walker:infinitely,Aldrovandi:Carleman}. ``Methods''
here includes recipes with unverified outcome. 

For example some years ago Peter Walker   
\cite{walker:solutions} was proposing a way to calculate the 
powerseries of an Abel function $\alpha$ of the exponential
$f(x)=e^x$ by solving an infinite linear equation system. His method
was independently rediscovered in the lay-mathematical community
(Andrew Robbins \cite{AndrewRobbins:2005:}), which documents a great interest 
for these kind of questions.

His method works as follows: We consider the Abel equation
\begin{align*}
  \alpha\circ f= 1 + \alpha
\end{align*}
with formal powerseries $\alpha$ and $f$ (in the hope that we obtain
$\alpha$ with $f(0)$ inside its convergence disk). We write the coefficient of
$x^n$ in the formal powerseries $f$ as $f_n$. Then the formula for
powerseries composition is $(\alpha\circ f)_m=\sum_{n=0}^\infty \alpha_n {f^n}_m$ where
${f^n}_m$ is the coefficient of $x^m$ in the $n$-th power of $f$.
The Abel equation can then be written as the infinite equation system
in the coefficients of $\alpha$:
\begin{align*}
  \sum_{n=0}^\infty \alpha_n {f^n}_m &=  I_{m,0} + \alpha_m & I_{m,n}
  &:= \begin{cases}1 & m=n\\0&m\neq n\end{cases}
\end{align*}
If we subtract $\alpha_m$ on each line $m$ then we get the standard
form of an infinite linear equation system:
\begin{align}\label{eq:Abel_equation_system}
  \sum_{n=1}^\infty \underbrace{\left({f^n}_m -
       I_{m,n}\right)}_{A_{m,n}} \alpha_n &=  I_{m,0} & m &\ge 0
\end{align}
The first column $A_{m,0}$ is always 0 as ${f^0}_0=1$ and ${f^0}_m=0$ for $m\ge
1$. That's why we start the sum (and $\alpha_n$) with $n=1$ (and we know anyway that
$\alpha$ may be determined merely up to $\alpha_0$).

Still this equation system must have infinitely many solutions, if
any, of $\alpha$ containing $f(0)$ in its convergence disk as we
explained before. 
The {\em intuitive} method to solve this equation
system --- and hence to select one of the infinitely many solutions
--- is to solve the to $N\times N$ truncated equation systems 
\begin{align}\label{eq:Abel_equation_system_truncated}
  \sum_{n=1}^N \underbrace{\left({f^n}_m -
       I_{m,n}\right)}_{A_{m,n}} \alpha^{(N)}_n &=  I_{m,0} &
   m=0,\dots,N-1
\end{align}
for increasing $N$ with the solution
$\alpha^{(N)}_1,\dots,\alpha^{(N)}_N$ in the hope that $\alpha_n:=\lim_{N\to\infty}
\alpha^{(N)}_n$ exists for every $n$ and the so obtained
coefficient sequence (or formal powerseries) $\alpha$ is a solution of the
untruncated equation system (or Abel equation) and has non-zero convergence radius.
\begin{definition}[$\restr_N$, intuitive]\label{def:intuitive}
  For an infinite linear equation system $Ax=b$ we denote the to $N$
  rows and $N$ columns truncated matrix with $A\restr_N$, similarly
  we denote truncated vectors.

  We call $(x_m)_{m\in\N}$ the {\em intuitive solution} if the limit
  $x_m:=\lim_{N\ge m,N\to\infty}\left(A\restr_N^{-1}
    b\restr_N\right)_m$ exists for each $m$.

  We call \eqref{eq:Abel_equation_system} the (infinite linear)
  {\em Abel equation system} of $f$ (developed at 0). We call its intuitive solution with
  $\alpha_0=0$ the {\em intuitive (formal) Abel powerseries} if
  existing. If $\alpha$ is analytic at 0 
  (i.e.\ having non-zero convergence radius) we call the corresponding
  analytic function just the {\em intuitive Abel function}, written as $\mathcal{A^I}[f]$.

  We call the intuitive Abel function $\beta$ of $g(x):=f(x+s)-s$
  (for $s$ inside the convergence disk of $f$) the intuitive Abel
  function of $f$ {\em developed at $s$}, written as
  $\mathcal{A}^{\mathcal{I}}_s[f]$. We call the function 
  $\beta(x-s)$ the {\em $s$-intuitive Abel function} of $f$.
\end{definition}

The last part perhaps needs some explanation. Generally if $\beta$ is
an Abel function of a conjugation $g=h^{-1}\circ f\circ h$ then
$\alpha=\beta\circ h^{-1}$ is an Abel function of $f$:
\begin{align}\label{eq:conjugation}
 \alpha\circ f = \beta\circ h^{-1} \circ f=\beta \circ g \circ h^{-1} = (1 + \beta) \circ h^{-1} = 1+\alpha. 
\end{align}
Particularly this is true for
shift conjugations $g(x)=f(x+s)-s$. 
In this case the above $\beta$ is the intuitive Abel function
developed at $s$. By definition $\beta(s)=0$. The above $\alpha$ is the
$s$-intuitive Abel function of $f$. 

Several until now unanswered questions arise here: For which $f$ does
the intuitive Abel powerseries exist (i.e.\ the coefficients
converge)? Is $\mathcal{A}^{\mathcal{I}}_s$ independent 
on $s$ in the sense that
$\mathcal{A}^{\mathcal{I}}_s[f](x)-\mathcal{A}^{\mathcal{I}}[f](x+s)$
is constant in $x$? Is it generally
invariant under conjugation, i.e.\ for which 
$h$ is $\mathcal{A^I}[h^{-1}\circ f\circ
h]-\mathcal{A^I}[f]\circ h$ constant? How does it
relate to regular iteration at a (nearby) fixpoint?

Besides the above questions there also arises the question 
whether this procedure gives the expected results for known elementary
Abel functions of $f$. The most basic example being $f(x)=b x$ with the
Abel function $\alpha(x)=\log_b(x)$.

\section{Intuitive Abel function of f(x)=bx}
$f$ has already the fixpoint 0 and it should be noted that 
the {\em regular} Abel function developed at this fixpoint is
indeed $\log_b$. The {\em intuitive} Abel function can however not be
directly developed at fixpoint 0, because in this case the first line of
our equation system is: $0\alpha_1+0\alpha_2+\dots = 1$.

So we proceed by calculating the intuitive Abel function $\beta$
developed at $s\neq 0$, i.e.\ the intuitive Abel function of the shift
conjugation $g(x)=b\cdot(x+s)-s$ which gives the $s$-intuitive Abel  
function $L_{b,s}(x)=\beta(x-s)$. 
We will later see that $L_{b,s}$ is independent on $s$ up to an
additive constant.

\subsection{Solving the truncated linear equation system with a
  recurrence}
In this subsection we solve the truncated equation system
\eqref{eq:Abel_equation_system_truncated} for $f$ being the above
given shift-conjugation $g$ obtaining the recursive formula
\eqref{eq:recurrence}. We call the solutions $\beta^{(N)}$ instead of
$\alpha^{(N)}$. 

In order to determine the occuring ${g^n}_m$ (which, recall, is the $m$-th
coefficient of the $n$-th power of $g$) we calculate:
\begin{align*}
  g(x)^n=(bx+\underbrace{s(b-1)}_{=:d})^n=\sum_{k=0}^n \binom{n}{k} d^{n-k}b^kx^k
\end{align*}
hence the Matrix $A$ in \eqref{eq:Abel_equation_system} is given by
subtracting the identity matrix from the matrix given by
\begin{align*}
  B_{m,n}&=\binom{n}{m} d^{n-m}b^m&\mt{e.g.\ } B\restr_4&=\begin{pmatrix}
    1 & d & d^2 & d^3\\
    0 & b & 2 d b & 3 d^2 b\\
    0 & 0   & b^2 & 3 d b^2\\
    0 & 0   & 0   & b^3
  \end{pmatrix}
\end{align*}
(which is also called the Bell matrix (or the transpose of the Carleman
matrix) of $g$, see
\cite{Aldrovandi:Special}) and then removing the first column. We have to solve the equation system
$A\restr_N(\beta_1^{(N)},\dots,\beta_N^{(N)})^T=(1,0,\dots,0)=:u$,
where
\[\begin{array}{lcccc}
  &A_{m,n}=\binom{n}{m} d^{n-m} b^{m}-
     I_{m,n} &&& u_m= I_{m,0}
  \\
  \\\substack{N=3}&\overbrace{\begin{pmatrix}
    d & d^2 & d^3\\
    b-1 & 2 d b & 3 d^2 b\\
    0   & b^2-1 & 3 d b^2
  \end{pmatrix}}^{n=1\dots N}
  &\begin{pmatrix}
    \beta^{(N)}_1\\\beta^{(N)}_2\\\beta^{(N)}_3
  \end{pmatrix}
  &=&\left.\begin{pmatrix}
    1\\0\\0
  \end{pmatrix}\right\}{\substack{m=0\dots N-1}}
\end{array} \]
Then we equivalently change the equation system by multiplying each row
$n$ with $s^n$
\[\begin{array}{lcccc}
  &\displaystyle A'_{m,n}=\binom{n}{m} d^{n-m} (s b)^m -
   I_{m,n}s^m&&&u'_m =  I_{m,0}
  \\\\\substack{N=3}&\begin{pmatrix}
    d & d^2 & d^3\\
    s b-s & 2d(s b) & 3d^2(s b)\\
    0   & (s b)^2-s^2 & 3d(s b)^2
  \end{pmatrix}
  &\beta^{(N)}
  &=&
  \begin{pmatrix}
    1\\0\\0
  \end{pmatrix}
\end{array}\]

As next step we equivalently change the equation system from
$A'$ to $A^{\prime\prime(N)}$ (the $N$ shall indicate that this matrix
depends on the truncation size $N$ while the previous steps where
independent of $N$)
by alternatingly adding up the lines onto the first line:
$A^{\prime\prime(N)}_0 = \sum_{n=0}^{N-1} (-1)^n A'_n$. The first line
vanishes  for $1\le m\le N-1$:
\begin{align*}
  A^{\prime\prime(N)}_{0,m} &= -(-1)^ms^m + \sum_{k=0}^{m}
  (-1)^{k}\binom{m}{k} d^{m-k} (s b)^{k}\\
  &=-(-1)^{m}s^m + (\underbrace{s(b-1)}_d-s b)^{m} = (-1)^{m+1}s^m+(-s)^m=0
\end{align*}
only the last entry at row $m=N$ is non-zero:
\begin{align*}
  A^{\prime\prime(N)}_{0,N} &= \sum_{n=0}^{m-1} (-1)^{n}\binom{m}{n}
  d^{m-n} (s b)^{k} 
  =(s(b-1)-s b)^m - (-1)^m (s b)^m 
  \\&= s^m (-1)^{m-1} ( b^m-1) = (-1)^{N-1} ((sb)^N -  I_{N,N}s^N)
\end{align*}
We multiply row 0 with $(-1)^{N-1}$ and rearrange
the lines by moving row ${n+1}$ one step up while row 0 becomes the
last row:
\[\begin{array}{lcccc}
  \substack{N=3}&
  \begin{pmatrix}
    s b-s & 2 d s b & 3 d^2 s b\\
    0   & (s b)^2-s^2 & 3d(s b)^2\\
    0  & 0  & (s b)^3-s^3
  \end{pmatrix}
  \beta^{(N)}
  &=&
  \begin{pmatrix}
    0\\0\\(-1)^2
  \end{pmatrix}
\end{array}\]
and drawing $s^m$ into $\beta$, counting now rows and columns with first index 1: 
\[\begin{array}{lcccc}
  &\displaystyle U_{m,n}=\binom{n}{m} (b-1)^{n-m} b^m- I_{m,n}&&h_m^{(N)}= I_{m,N}(-1)^{N-1}
  \\\\\substack{N=3}&\overbrace{\begin{pmatrix}
    b-1 & 2(b-1)b & 3(b-1)^2 b\\
    0   & b^2-1 & 3(b-1)b^2\\
    0  & 0  & b^3-1
  \end{pmatrix}}^{n=1\dots N}
  \begin{pmatrix}
    s\beta^{(N)}_1 \\s^2 \beta^{(N)}_2 \\s^3\beta^{(N)}_3 
  \end{pmatrix}
  &=&
  \left.\begin{pmatrix}
    0\\0\\(-1)^2
  \end{pmatrix}\right\}\substack{m=1\dots N}
\end{array}.\]

The inverse matrix of a truncation of an upper triangular matrix is equal to
the truncation of the inverse. We can give the $n$-th column of the
inverse in terms of the values of the previous columns and the $n$-th
column of the original matrix:
\begin{align*}
U^{-1}_{m,n} &= \frac{1}{U_{n,n}}\left( I_{m,n}-\sum_{k=m}^{n-1} U^{-1}_{m,k} U_{k,n}\right)
\\         &=
\frac{1}{b^n-1}\left( I_{m,n}-\sum_{k=m}^{n-1}U^{-1}_{m,k}\binom{n}{k}
  (b-1)^{n-k} b^k\right)
\end{align*}
The multiplication with $h^{(N)}$ chooses the $N$-th column
of $U^{-1}$ multiplied with sign $(-1)^{N-1}$: $\beta^{(N)}_m
=(-1)^{N-1} U^{-1}_{m,N}$, yielding the recursion
\begin{align}\label{eq:recurrence}
  s^m\beta^{(n)}_m &=
  \frac{1}{1-b^n}\left( I_{m,n}(-1)^m+\sum_{k=m}^{n-1}s^m\beta_{m}^{(k)}\binom{n}{k}
  (1-b)^{n-k} b^k\right)
\end{align}
\subsection{A direct expression of the solution}
In this subsection we apply the technique of generating functions to
obtain the direct (non-recursive) formula \eqref{eq:direct} for $\beta^{(n)}_m$. Though I
include the derivation, it is not necessary for the proof of the
formula. So the uninterested reader may skip to proposition
\ref{prop:direct} where the actual verification of the formula takes place.

We change from variable $m$ to $M$ as it will remain constant for our
further considerations and the index $m$ is needed to not run out of
variables. Multiplying \eqref{eq:recurrence} with $1-b^n$ and adding
$s^M\beta^{(n)}_Mb^n$ gives
\begin{align}\label{eq:recurrence2}
  s^M\beta^{(n)}_M &=  I_{n,M}(-1)^M+\sum_{k=M}^{n}s^M\beta_{M}^{(k)}\binom{n}{k} (1-b)^{n-k} b^k
\end{align}
We further manipulate the equations
\begin{align*}
  \underbrace{s^M \beta^{(n)}_M \frac{b^n}{n!} M!}_{T_n}
  &= I_{n,M}(-1)^Mb^n+
  b^{n}\sum_{k=M}^{n}\underbrace{s^M\beta_{M}^{(k)}\frac{b^k}{k!}M!}_{T_k}
  \frac{(1-b)^{n-k}}{(n-k)!} 
\end{align*}
to obtain the following recurrence in $T_n$
\begin{align*}
  \frac{T_n}{b^n} &=  I_{n,M}(-1)^M+\sum_{k=M}^{n}T_n  \frac{(1-b)^{n-k}}{(n-k)!}
\end{align*}
of which we consider the generating function $T(x)=\sum_{n=0}^\infty T_n
x^n$. We get the left side of the last equation as the coefficients of $T(x/b)$
\begin{align*}
  T(x/b) &= \sum_{n=0}^\infty \frac{T_n}{b^n} x^n 
\end{align*}
and the right side is the multiplication of two formal powerseries
(remember the formula $(fg)_n = \sum_{k=0}^n f_{n-k} g_k$),
namely $T$ and 
\begin{align*}
  \sum_{j=0}^\infty \frac{(1-b)^j}{j!}x^j = e^{(1-b)x}
\end{align*}
leading us to
\begin{align*}
  T(x/b) &= (-x)^M+T(x)e^{(1-b)x}.
\end{align*}
The reader may verify the following transformations for $n\ge 0$:
\begin{align*}
  T(b^{-(n+1)} x) &= (-x)^M\left(\sum_{k=0}^{n}
    \frac{e^{(1-b)x\sum_{i=k+1}^{n} b^{-i} }}{b^{kM}}\right) + T(x) e^{(1-b)x\sum_{k=0}^n b^{-k}} 
  \\ T(b^{-(n+1)} x) &= 
  (-x)^M\left(\sum_{k=0}^{n} \frac{e^{ bx\left(b^{-(n+1)} -
          b^{-(k+1)}\right)}}{b^{kM}}\right) + T(x)e^{b x (b^{-(n+1)}-1)} 
  \\ e^{-b x b^{-(n+1)}}T(b^{-(n+1)} x) -T(x)e^{-b x}  &= (-x)^M\left(\sum_{k=0}^{n}
    \frac{e^{-x b^{-k}}}{b^{kM}}\right)  
  \\ &= (-x)^M\sum_{m=0}^\infty \frac{(-x)^m}{m!}\sum_{k=0}^{n} b^{-{k m}-{k M}}
  \\ &= \sum_{m=0}^\infty
  \frac{(-x)^{m+M}}{m!}\frac{b^{-(m+M)(n+1)}-1}{b^{-(m+M)}-1}
  \\ e^{-b x y}T(y x) -T(x)e^{-b x}  &= \sum_{m=0}^\infty
  \frac{(-1)^{m+M}}{m!}\frac{y^{m+M}-x^{m+M}}{b^{-(m+M)}-1}
  \\ e^{-b x y}T(yx) -T(x)e^{-b x}  &= S(y) - 
  \underbrace{\left(\sum_{m=0}^\infty
      \frac{(-x)^{m+M}}{m!(b^{-(m+M)}-1)}\right)}_{S(x)}
\end{align*}
Letting now $y=b^{-n}\to 0$ by $n\to\infty$ for $\abs{b}>1$,
considering $S$ and $T$ being continuous at $0$ and $S(0)=T(0)=0$ we
get:
\begin{align*}
  T(x) &= e^{bx} S(x) = e^{bx} \sum_{m=M}^\infty
  \frac{(-x)^{m}}{(m-M)!(b^{-m}-1)}
\end{align*}
with the coefficients given by formal powerseries multiplication:
\begin{align*}
  T_n &= \sum_{k=M}^{n} \frac{b^{n-k}}{(n-k)!}
  \frac{(-1)^{k}}{(k-M)!(b^{-k}-1)} 
  \\&= \frac{b^{n}}{n!}M!\sum_{k=M}^{n} \binom{n}{k}\binom{k}{M}\frac{(-1)^{k}}{1-b^k} 
\end{align*}
So the direct formula is
\begin{align}\label{eq:direct}
  \beta_m^{(n)} = s^{-m}\sum_{k=m}^{n} \binom{n}{k}\binom{k}{m} \frac{(-1)^k}{1-b^k} 
\end{align}
\begin{proposition}\label{prop:direct}
  The direct expression \eqref{eq:direct} satisfies the recurrence
  \eqref{eq:recurrence} for any $b\in\C$ that is not a root of unity (i.e.\
  $b^n\neq 1$ for any $n\ge 1$), and is
  hence the solution of the truncated Abel equation system
  $A\restr_n\beta^{(n)}=u\restr_n$ of $g(x)=b\cdot(x+s)-s$
  for $s\neq 0$.
\end{proposition}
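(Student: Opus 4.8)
The plan is to verify the formula by direct substitution, reducing everything to two applications of the binomial theorem. First I would absorb the prefactor $s^{-m}$ and work with $c_m^{(n)} := s^m\beta_m^{(n)} = \sum_{k=m}^{n}\binom{n}{k}\binom{k}{m}\frac{(-1)^k}{1-b^k}$, which is independent of $s$; the hypothesis that $b$ is not a root of unity guarantees $1-b^k\neq 0$ for every $k\geq 1$, so each term is well-defined, and the same hypothesis gives $1-b^n\neq 0$. The latter makes \eqref{eq:recurrence} equivalent to the tidier form \eqref{eq:recurrence2} already derived in the text (multiply by $1-b^n$ and shift the missing $k=n$ term into the sum, noting that to reverse the step one divides by the nonzero $1-b^n$). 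It therefore suffices to check that $c_m^{(n)} = I_{n,m}(-1)^m + \sum_{k=m}^{n}c_m^{(k)}\binom{n}{k}(1-b)^{n-k}b^k$.

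The core computation substitutes the direct expression for $c_m^{(k)}$ into the right-hand sum and interchanges the order of the resulting double sum, collecting terms by the inner index $j$ (running from $m$ to $n$) with $k$ now running from $j$ to $n$. The key algebraic input is the subset-of-a-subset identity $\binom{n}{k}\binom{k}{j}=\binom{n}{j}\binom{n-j}{k-j}$, which pulls $\binom{n}{j}$ outside the $k$-sum and leaves $\binom{n}{j}b^{j}\sum_{i=0}^{n-j}\binom{n-j}{i}b^{i}(1-b)^{n-j-i}$ after reindexing $i=k-j$; by the binomial theorem the inner sum equals $(b+(1-b))^{n-j}=1$. Hence the entire double sum collapses to $\sum_{j=m}^{n}\binom{n}{j}\binom{j}{m}\frac{(-1)^j b^j}{1-b^j}$.

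What remains is to compare this against $c_m^{(n)}=\sum_{j=m}^{n}\binom{n}{j}\binom{j}{m}\frac{(-1)^j}{1-b^j}$. Using $\frac{1}{1-b^j}-\frac{b^j}{1-b^j}=1$, the difference of the two sums is exactly $\sum_{j=m}^{n}\binom{n}{j}\binom{j}{m}(-1)^j$, so the required equality reduces to $\sum_{j=m}^{n}\binom{n}{j}\binom{j}{m}(-1)^j = I_{n,m}(-1)^m$. The same subset identity rewrites the left side as $\binom{n}{m}(-1)^m\sum_{i=0}^{n-m}\binom{n-m}{i}(-1)^i = \binom{n}{m}(-1)^m(1-1)^{n-m}$, and since $0^{n-m}$ vanishes unless $n=m$ (where $\binom{n}{m}=1$) this is precisely $I_{n,m}(-1)^m$, which closes the verification; note the diagonal case $n=m$ is handled uniformly by the same computation.

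For the second assertion I would invoke that the reduction carried out before \eqref{eq:recurrence} consists only of equivalence transformations of the truncated system together with a triangularisation whose diagonal entries $b^n-1$ are nonzero under the hypothesis; hence the triangular matrix $U$ is invertible and \eqref{eq:recurrence} characterises the unique solution of $A\restr_n\beta^{(n)}=u\restr_n$. As \eqref{eq:direct} satisfies that recurrence, it is that solution. I expect the only real obstacle to be bookkeeping rather than ideas: choosing the cleaner equivalent recurrence \eqref{eq:recurrence2} at the outset, keeping the summation bounds straight across the interchange, and recognising that the leftover alternating sum is a Kronecker delta in disguise via $0^{n-m}$ instead of a genuinely new identity.
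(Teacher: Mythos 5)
Your proposal is correct and follows essentially the same route as the paper's proof: substitute the direct formula into the equivalent recurrence \eqref{eq:recurrence2}, interchange the double sum, apply the identity $\binom{n}{k}\binom{k}{j}=\binom{n}{j}\binom{n-j}{k-j}$ so the binomial theorem collapses the inner sum to $1$, and reduce the remainder to the alternating-sum Kronecker delta via $(1-1)^{n-m}=0^{n-m}$. Your explicit justification of the second assertion (equivalence transformations plus nonzero diagonal entries $b^n-1$ of the triangular matrix) is slightly more careful than the paper, which leaves that step implicit, but it is the same argument.
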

\begin{proof}
We just fill the direct expression \eqref{eq:direct} in equation
\eqref{eq:recurrence2} (which is equivalent to
the recurrence \eqref{eq:recurrence}) and show by equivalent
transformation that the recurrence is satisfied:
\begin{align*}
  s^M\beta^{(n)}_M &=
   I_{n,M}(-1)^M+\sum_{k=M}^{n}\sum_{k'=M}^{k} \binom{k}{k'}\binom{k'}{M} \frac{(-1)^{k'}}{1-b^{k'}} \binom{n}{k} (1-b)^{n-k} b^k
  \\&= I_{n,M}(-1)^M+\sum_{k'=M}^{n}\binom{k'}{M}\frac{(-1)^{k'}}{1-b^{k'}}  \sum_{k=k'}^{n}\binom{n}{k}\binom{k}{k'}  (1-b)^{n-k} b^k
  \\&= I_{n,M}(-1)^M+\sum_{k'=M}^{n}\binom{n}{k'}\binom{k'}{M}\frac{(-1)^{k'}}{1-b^{k'}}
  \underbrace{\sum_{k=k'}^{n}\binom{n-k'}{k-k'}  (1-b)^{n-k}
    b^k}_{b^{k'}\sum_{k''=0}^{n-k'} \binom{n-k'}{k''} (1-b)^{n-k'-k''}
    b^{k''}}
  \\&= I_{n,M}(-1)^M+\sum_{k'=M}^{n}\binom{n}{k'}\binom{k'}{M}\frac{(-b)^{k'}}{1-b^{k'}}
\end{align*}
Then we replace $s^M\beta^{(n)}$ on the left and subtracting the
right sum
\begin{align*}
  \sum_{k'=M}^{n}\binom{n}{k'}\binom{k'}{M}\frac{(-1)^{k'}-(-b)^{k'}}{1-b^{k'}}&= I_{n,M}(-1)^M
  \\\sum_{k'=M}^{n}\binom{n}{k'}\binom{k'}{M}(-1)^{k'}&= I_{n,M}(-1)^M
  \\\binom{n}{M}\sum_{k'=M}^{n}\binom{n-M}{k'-M}(-1)^{k'}&= I_{n,M}(-1)^M
  \\\binom{n}{M}(-1)^M0^{N-m}&= I_{n,M}(-1)^M
\end{align*}
And this is indeed a true statement, considering $0^k=0$ for $k\ge 1$
and $0^0=1$.
\end{proof}

\subsection{Convergence of the polynomial approximation}
Now, that we obtained the truncated solutions $\beta^{(N)}$, we want to
see whether the limit $\beta_n=\lim_{N\to\infty} \beta^{(N)}_n$
(according to our definition \ref{def:intuitive} of ``intuitive
solution'') exists for each $n$, which would be then the $n$-th
coefficient of the intuitive Abel function $\beta$ of $g$; where
$\alpha(x)=\beta(x-s)$ would be the $s$-intuitive Abel function of $f(x)=bx$
(compare \eqref{eq:conjugation}), which we
want to prove to be $\alpha(x)=\log_b(x)+c$ for some $c$ possibly
depending on $s$.

The coefficient wise convergence would be a direct consequence of the
pointwise convergence of the polynomial approximations 
$\alpha^{(n)}(x)=\sum_{m=1}^n \beta_m^{(n)} (x-s)^m$ to
$\alpha(x)$. In the following subsection we prove this convergence by
showing that $\tilde{\alpha}^{(n)}(x):=\alpha^{(n)}(sx)$ converges to
$\log_b(x)$. Our efforts culminate in the summarizing theorem
\ref{thm:final}.
\begin{align*}
  \alpha^{(n)}(x) &= \sum_{m=1}^\infty (x-s)^m s^{-m} \sum_{k=m}^{n} \binom{n}{k}\binom{k}{m} \frac{(-1)^k}{1-b^k} 
  \\&=\sum_{k=1}^n \binom{n}{k}\frac{(-1)^k}{1-b^k} \sum_{m=1}^k (x/s-1)^m \binom{k}{m}
  \\&=\sum_{k=1}^n \binom{n}{k}(-1)^{k+1}\frac{1-(x/s)^k}{1-b^k},
\end{align*}
which is the $s$-free function $\tilde{\alpha}^{(n)}$ applied to $x/s$:
\begin{align}
  \label{eq:log_sequence}
  \tilde{\alpha}^{(n)}(x):=\alpha(sx)=\sum_{k=1}^n \binom{n}{k}(-1)^{k+1}\frac{1-x^k}{1-b^k} .
\end{align}
With little effort,
\begin{align}
  \label{eq:onpow}
  \sum_{k=1}^n \binom{n}{k}(-1)^{k+1} y^k = 1 - (1-y)^n,
\end{align}
we can compute the value of $\tilde{\alpha}^{(n)}(b^m)$
\begin{align*}
  \tilde{\alpha}^{(n)}(b^m) &= \sum_{k=1}^n \binom{n}{k}(-1)^{k+1}\sum_{i=0}^{m-1} b^{ki}
  =\sum_{i=0}^{m-1} 1-(1-b^i)^n
  \\ \lim_{n\to\infty} \tilde{\alpha}^{(n)}(b^m) &= m
\end{align*}
confirming our hypothesis that $\lim_{n\to\infty} \tilde{\alpha}^{(n)}(x) =
\log_b(x)$. However to prove it (for a series with $f(a^n)=n$ for all
$n\ge 1$ that is not the logarithm see e.g. Euler
\cite{Euler:FalseLogarithm}) we need to show that $\lim_{n\to\infty} 
\tilde{\alpha}^{(n)}(b^x) = x$ also for non-integer $x$. 
To be careful we restrict $x$ and $b$ from here throughout this
section to $0<b<1$ and $0<x<1$. 
We have a look at the series expansion of $\frac{1-y^x}{1-y}$ for
$0<y<1$:
\begin{align}
  1-y^x &= 1-\sum_{j=0}^\infty \binom{x}{j} (y-1)^j
              = -\sum_{j=1}^\infty \binom{x}{j} (y-1)^j
  \\\frac{1-y^x}{1-y}&=\sum_{j=0}^\infty \binom{x}{j+1} (y-1)^j\label{eq:frac}
  \\ &= \sum_{j=0}^\infty \binom{x}{j+1} \sum_{i=0}^j \binom{j}{i} (-1)^{j-i}y^i
\end{align}
Now substituting $y=b^k$
\begin{align}
  \tilde{\alpha}^{(n)}(b^x) &= \sum_{k=1}^n \binom{n}{k}(-1)^{k+1}\frac{1-(b^k)^x}{1-b^k} \label{eq:sum}
\end{align}
And knowing that $\sum_{k=1}^n \binom{n}{k}(-1)^{k+1} b^{ki} = 1 -
(1-b^i)^n$ we write
\begin{align*}
  \tilde{\alpha}^{(n)}(b^x)  &= \sum_{j=0}^\infty \binom{x}{j+1} \sum_{i=0}^j \binom{j}{i} (-1)^{j-i} (1-(1-b^{i})^n) .
\end{align*}
We split the sums into two parts at the first minus of $1 -
(1-b^i)^n$. Considering $\sum_{i=0}^j \binom{j}{i} (-1)^{j-i} 1 =
0^j$, where $0^j=0$ for $j\ge 1$ and $0^0=1$, we get
\begin{align*}
  \tilde{\alpha}^{(n)}(b^x) &=  x-\sum_{j=1}^\infty \binom{x}{j+1} \underbrace{\sum_{i=0}^j \binom{j}{i} (-1)^{j-i} (1-b^{i})^n }_{R_j^{(n)}}.
\end{align*}
Obviously $\lim_{n\to\infty} R^{(n)}_j = 0$ for $j\ge 1$ because 
$\abs{1-b^i}<1$ for each $i$. In the
remainder of this section we show that the sequence (of sequences) $R^{(n)}$ converges
not only point-wise but uniformly to $(0,0,\dots)$ in the supremum norm $||v|| =
\sup_{j\in\N}\abs{v_j}$ which then implies that we can
swap taking the limit in $n$ with the limit in $j$.

So we show that for each $\eps>0$ there is an $n_0$ such that
$\abs{R^{(n)}_j}<\eps$ for all $j$ and $n>n_0$. For $j=1$ we find an
$n=n_0$ such that
\begin{align*}
  \abs{R^{(n)}_j} \le \underbrace{\sum_{i=0}^j \binom{j}{i}
    \abs{1-b^{i}}^{n}}_{=:d_{j,n}} <\eps
\end{align*}
Noticing that $d_{j,n}$ is decreasing in the second index $n$ the
above equation is also valid for any $n\ge n_0$.

By binomially expanding the power with exponent $n$ and unexpand it
into a power with exponent $j$ we reformulate the expression of
$R^{(n)}_j$ to:
\begin{align*}
  R^{(n)}_j = (-1)^{j-n}\sum_{k=0}^n \binom{n}{k}(-1)^{n-k} (1-b^k)^j
\end{align*}
and so obtain that $\abs{R^{(n)}_j} = \abs{R^{(j)}_n} \le d_{n,j}$ where
$d_{n,j}$ is decreasing in the second argument which is now $j$. Hence $\abs{R^{(n)}_j}<\eps$ for all
$n\ge n_0$ and $j\ge 1$. 

Now, to finish the proof we show that 
\begin{align*}
  \lim_{n\to\infty}\sum_{j=1}^\infty \binom{x}{j+1} R^{(n)}_j = 0
\end{align*}
As prerequisite we need the well known
\begin{preliminary}
  $\sum_{n=0}^\infty \binom{\kappa}{n} z^n$ converges absolutely (to
  $(z+1)^\kappa$) for all $z$ with $\abs{z}=1$ if $\Re(\kappa) > 0$.
\end{preliminary}
\begin{proof}
Done via comparison with the Riemann zeta function by the
inequality $\abs{\binom{\kappa}{k}} \le
\frac{e^{\abs{\kappa}^2+\Re(\kappa)}}{k^{1+\Re(\kappa)}}$, $k\ge 1$.
\end{proof}
\begin{corollary}
  The limit $t_\kappa:=\sum_{j=1}^\infty \abs{\binom{\kappa}{j+1}}$ exists for every $\kappa>0$.
\end{corollary}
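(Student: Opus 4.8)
The plan is to recognize this as an immediate specialization of the preceding Preliminary. Since $\kappa>0$ entails $\Re(\kappa)>0$, and since $z=1$ satisfies $\abs{z}=1$, the Preliminary applies and guarantees that $\sum_{n=0}^\infty\binom{\kappa}{n}$ converges absolutely, that is, $\sum_{n=0}^\infty\abs{\binom{\kappa}{n}}<\infty$. The sum defining $t_\kappa$ is then merely a (re-indexed) tail of this absolutely convergent series: substituting $n=j+1$ yields $\sum_{j=1}^\infty\abs{\binom{\kappa}{j+1}}=\sum_{n=2}^\infty\abs{\binom{\kappa}{n}}$, which is bounded above by the full sum $\sum_{n=0}^\infty\abs{\binom{\kappa}{n}}$. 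Hence the limit $t_\kappa$ exists and is finite.

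Should one prefer not to route through the closed-form value $(z+1)^\kappa$ at $z=1$, the same conclusion follows directly from the comparison inequality already recorded in the proof of the Preliminary, namely $\abs{\binom{\kappa}{k}}\le e^{\abs{\kappa}^2+\Re(\kappa)}/k^{1+\Re(\kappa)}$ for $k\ge 1$. Summing this bound over $k\ge 2$ dominates $t_\kappa$ by a constant multiple of $\sum_{k\ge 2}k^{-(1+\kappa)}$, a convergent $p$-series (equivalently, a tail of $\zeta(1+\kappa)$) precisely because $1+\kappa>1$ whenever $\kappa>0$.

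There is no genuine obstacle here; the statement is a one-line consequence of material already established, and its role is evidently to package the convergence in the exact form (index shifted by one, coefficients taken in absolute value) needed to control the error term $\sum_{j\ge 1}\binom{x}{j+1}R^{(n)}_j$ in the surrounding argument. The only point requiring a moment's care is the index shift by one, $\binom{\kappa}{j+1}$ rather than $\binom{\kappa}{j}$, which simply discards the finitely many initial terms $n=0,1$ and therefore cannot affect convergence; the finiteness of $t_\kappa$ is what subsequently licenses interchanging $\lim_{n\to\infty}$ with the $j$-summation via dominated convergence, using the established uniform bound $\abs{R^{(n)}_j}<\eps$.
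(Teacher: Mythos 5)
Your proof is correct and takes essentially the same route as the paper, which simply sets $z=1$ in the Preliminary; your additional care with the index shift $n=j+1$ (discarding the terms $n=0,1$) only makes explicit a detail the paper leaves tacit.
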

\begin{proof}
  Follows from letting $z=1$ in the preliminary. 
\end{proof}

Then for a given $\eps>0$ choose $n_0$ such that
$R^{(n)}_j <\eps/t_x$ for all $j\ge 1$ and $n\ge n_0$ and obtain:
\begin{align*}
  \abs{\sum_{j=1}^\infty \binom{x}{j+1} R^{(n)}_j }
  < {\sum_{j=1}^\infty \abs{\binom{x}{j+1}} \frac{\eps}{t_x} }
  =\eps\mt{for}n\ge n_0
\end{align*}
So we have established that $\lim_{n\to\infty} \tilde{\alpha}^{(n)}(b^x) = x$ for
$x>0$. Which has the consequence that $\tilde{\alpha}(y):=\lim_{n\to\infty} \tilde{\alpha}^{(n)}(y)$
exists for all $0<y<1$ and is the inverse function of $x\mapsto b^x$.

We finally summarize all our findings:
\begin{theorem}\label{thm:final}
   The (unique) polynomial $\beta^{(n)}(z)$ of degree $n$ that satisfies
   $\beta^{(n)}(0)=0$ and the Abel equation
   \begin{align*}
     \beta^{(n)}(b\cdot(z+s)-s) &= 1+\beta^{(n)}(z) \quad (s\neq
     0,\; b^k\neq 1\forall k\ge 1)
   \end{align*}
   is given by $\beta^{(n)}(z)=\alpha^{(n)}(z+s)=\tilde{\alpha}^{(n)}(z/s+1)$ where
   \begin{align*}
      \tilde{\alpha}^{(n)}(z)&=\sum_{k=1}^n \binom{n}{k}(-1)^{k+1}\frac{1-z^k}{1-b^k} 
   \end{align*}
   and for all $x,b\in(0,1)$ we have the convergence:
   \begin{align}
    \lim_{n\to\infty} \tilde{\alpha}^{(n)}(x)=\log_b(x).
  \end{align}
  $\alpha(x)=\tilde{\alpha}(x/s)=\log_b(x)-\log_b(s)$ is the $s$-intuitive Abel
  function of $f(x)=bx$. (It is independent on
  $s$ up to an additive constant.)
\end{theorem}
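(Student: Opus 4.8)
The plan is to read the theorem as the bookkeeping that ties together the three ingredients prepared above: the explicit truncated solution of \propref{prop:direct}, the invertibility that makes it the \emph{unique} degree-$n$ solution, and the limit analysis just completed. I would write it up in that order.

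First I would settle uniqueness and the closed form of $\beta^{(n)}$. Expanding the equation $\beta^{(n)}(g(z))=1+\beta^{(n)}(z)$ for $g(z)=b(z+s)-s$ and matching the coefficients of $z^0,\dots,z^{n-1}$ is, by construction, precisely the truncated system \eqref{eq:Abel_equation_system_truncated}, while the normalisation $\beta^{(n)}(0)=0$ fixes the constant term. Here I would flag the one genuine subtlety: the coefficient of $z^n$ \emph{cannot} be matched, since that equation reads $(b^n-1)$ times the leading coefficient $=0$ and would force the leading coefficient to vanish, contradicting $\deg\beta^{(n)}=n$; so ``$\beta^{(n)}$ satisfies the Abel equation'' has to be read modulo $z^n$, i.e.\ as the truncated system, which is the only reading admitting a degree-$n$ solution. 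Uniqueness then follows because the row reduction performed above turns $A\restr_n$ into the upper triangular matrix $U$ with diagonal entries $b^k-1\neq 0$ (using $b^k\neq 1$), hence invertible; existence and the explicit coefficients are supplied by \propref{prop:direct}. The chain $\beta^{(n)}(z)=\alpha^{(n)}(z+s)=\tilde\alpha^{(n)}(z/s+1)$ is then the substitution already recorded before \eqref{eq:log_sequence}, since $\alpha^{(n)}(x)=\tilde\alpha^{(n)}(x/s)$.

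Next I would transcribe the convergence. The previous subsection established $\lim_{n\to\infty}\tilde\alpha^{(n)}(b^x)=x$ for every $x>0$ at fixed $0<b<1$, via the uniform bound $R^{(n)}\to 0$ in the supremum norm together with the corollary $t_x<\infty$. Substituting $y=b^x$, which sweeps out all of $(0,1)$ as $x$ runs over $(0,\infty)$, yields exactly $\lim_{n\to\infty}\tilde\alpha^{(n)}(y)=\log_b(y)$ for $y,b\in(0,1)$. For the final clause I would evaluate at $y=x/s$: for $s>0$ and $0<x<s$ this gives $\alpha^{(n)}(x)=\tilde\alpha^{(n)}(x/s)\to\log_b(x/s)=\log_b(x)-\log_b(s)$, so the dependence on $s$ is visibly only the additive constant $-\log_b(s)$.

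The step I expect to be the real obstacle is the identification of this limit with the \emph{$s$-intuitive Abel function} in the sense of \defref{def:intuitive}, which demands that the coefficient-wise limits $\beta_m=\lim_N\beta_m^{(N)}$ exist and assemble into an analytic germ. Pointwise convergence of the polynomials $\alpha^{(n)}$ on a real interval does not by itself force their Taylor coefficients to converge, because the degrees are unbounded, so this point needs a genuine argument. I would close it either by upgrading the convergence to local uniform convergence on a complex neighbourhood of the development point, whence Weierstrass' theorem gives convergence of every Taylor coefficient to that of $\log_b(x)-\log_b(s)$, or by computing the coefficient limits directly: writing $\binom{n}{k}\binom{k}{m}=\binom{n}{m}\binom{n-m}{k-m}$ in \eqref{eq:direct} collapses the inner sum to $\beta_m^{(n)}=s^{-m}(-1)^m\binom{n}{m}\sum_{\ell\ge 1}b^{m\ell}(1-b^\ell)^{n-m}$ via the binomial identity $\sum_j\binom{n-m}{j}(-1)^jb^{j\ell}=(1-b^\ell)^{n-m}$, and a Riemann-sum/Beta-integral estimate as $n\to\infty$ recovers $\beta_m=s^{-m}(-1)^{m-1}/(m\ln b)$, exactly the Taylor coefficients of $\log_b(x/s)$ at $x=s$. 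Either route confirms that the intuitive solution exists and equals the claimed logarithm, completing the theorem.
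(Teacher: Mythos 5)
Your first two steps track the paper's own route almost exactly: the paper never gives a self-contained proof of the theorem --- it is explicitly a summary --- and its substance is precisely the material you assemble: the truncated system \eqref{eq:Abel_equation_system_truncated} solved in closed form via \propref{prop:direct}, invertibility through the reduction to the triangular matrix $U$ with diagonal entries $b^k-1\neq 0$ (the paper leaves the uniqueness conclusion implicit; you make it explicit), the substitution $\alpha^{(n)}(x)=\tilde{\alpha}^{(n)}(x/s)$ recorded at \eqref{eq:log_sequence}, and the pointwise limit $\tilde{\alpha}^{(n)}(b^x)\to x$ via the sup-norm bound on $R^{(n)}$ together with the corollary $t_x<\infty$. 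Your flag that the coefficient of $z^n$ cannot be matched is correct and is a genuine (if minor) repair of the theorem's literal statement: comparing $z^n$-coefficients forces $\beta_n(b^n-1)=0$, hence $\beta_n=0$, and cascading down would force $\beta^{(n)}\equiv 0$, contradicting row $m=0$; so ``degree-$n$ polynomial satisfying the Abel equation'' can only mean the equations for the coefficients of $z^0,\dots,z^{n-1}$, i.e.\ exactly the truncated system, as you say.

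The real issue is your third step, and here your diagnosis is better than your cure. You are right that the paper offers no argument for the identification with the $s$-intuitive Abel function of \defref{def:intuitive}: it merely asserts that coefficientwise convergence ``would be a direct consequence'' of pointwise convergence of the $\alpha^{(n)}$, which is false for polynomials of unbounded degree (e.g.\ $nx(1-x^2)^n\to 0$ pointwise on $(-1,1)$ while the linear coefficient diverges). But neither of your repairs closes the hole. Route (i) is unavailable: the paper's estimates are purely real (arguments $b^x\in(0,1)$), and nothing in them yields locally uniform convergence on a complex neighbourhood of the development point. Route (ii) starts well --- your collapse of \eqref{eq:direct} to $\beta_m^{(n)}=s^{-m}(-1)^m\binom{n}{m}\sum_{\ell\ge 1}b^{m\ell}(1-b^\ell)^{n-m}$ is correct for $n>m$, and the Beta-integral heuristic does reproduce the Taylor coefficients of $\log_b(x/s)$ --- but the sum-to-integral step is precisely where the argument breaks. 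This is a geometric harmonic sum of the kind treated by Mellin/Rice asymptotics: the relevant transform has poles at $m+2\pi\I k/\ln b$, $k\in\Z$, and the poles with $k\neq 0$ contribute bounded oscillations in $\log_{1/b}n$ of nonzero amplitude of order $\abs{\Gamma(m+2\pi\I k/\ln b)}/m!$ (about $10^{-6}$ for $b=1/2$, numerically invisible but fatal to a strict limit). For $m=1$ the quantity is, up to normalization, the expected number of maxima among $n$ i.i.d.\ geometric variables, a standard example of exactly this non-convergent fluctuation. So replacing the sum by the integral discards terms of the same order as the claimed limit; and if the fluctuation is genuinely nonzero, route (i) is not merely unproven but impossible, since locally uniform complex convergence would force the coefficients to converge. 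In short: you correctly reproduced everything the paper actually proves, and you correctly located the gap --- which is a gap in the paper as well --- but your patches inherit it, and your own collapsed formula is the clearest evidence that the final clause of the theorem requires either a weaker reading of ``intuitive solution'' or a genuinely different argument.
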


\section{Comments}
The most urgent questions to develop the mathematics of the intuitive method are already
listed in the introduction. Here only some side notes:

Numerically it appears that convergence of $\tilde{\alpha}$ is also achieved for
$\abs{x/b-1}<1$ in the case $b>1$ which points towards possible
improvements of the theorem.

The more interesting question about the convergence of the approximating
polynomials of the intuitive Abel function of $f(x)=e^x$ seems out of
reach to solve with these rather elementary techniques. Numerically at
least it seems that the coefficients do  not converge
uniformly but have a point-wise limit which is invariant under shift
conjugations.

The author thanks for the stimulating discussions on
\cite{Tetrationforum}. Without them this paper would not have come into
existence.

\bibliography{main}

\end{document}